\documentclass[11pt,a4paper]{amsart}
\usepackage[T1]{fontenc}
\usepackage{lmodern}
\usepackage{amsmath,amssymb,mathtools}
\usepackage[margin=20mm]{geometry}
\usepackage{microtype}
\usepackage[hidelinks]{hyperref}
\allowdisplaybreaks[2]

\newtheorem{theorem}{Theorem}[section]
\newtheorem{proposition}[theorem]{Proposition}
\newtheorem{lemma}[theorem]{Lemma}
\newtheorem{corollary}[theorem]{Corollary}
\theoremstyle{definition}
\newtheorem{definition}[theorem]{Definition}

\theoremstyle{remark}
\newtheorem{remark}[theorem]{Remark}
\numberwithin{equation}{section}

\newcommand{\R}{\mathbb R}
\newcommand{\HH}{\mathbb H}
\newcommand{\SSph}{\mathbb S}
\newcommand{\cH}{\mathcal H}
\newcommand{\cQ}{\mathcal Q}
\newcommand{\dd}{\,\mathrm d}
\newcommand{\e}{\mathrm e}
\newcommand{\Hvec}{\mathbf H}
\DeclareMathOperator{\Vol}{Vol}
\DeclareMathOperator{\Per}{Per}
\DeclareMathOperator{\diam}{diam}
\DeclareMathOperator{\dist}{dist}
\DeclareMathOperator{\spt}{spt}
\DeclareMathOperator{\conv}{conv}
\DeclareMathOperator{\tr}{tr}
\DeclareMathOperator{\diver}{div}
\DeclareMathOperator{\Hess}{Hess}
\DeclareMathOperator{\Lip}{Lip}

\title[Isoperimetry under controlled variation of curvature]
{The Cartan--Hadamard isoperimetric inequality\ under controlled variation of curvature}
\author[G. Wheeler]{Glen Wheeler}
\address{School of Mathematics and Physics, Faculty of Engineering and
Information Sciences, University of Wollongong, Northfields Avenue, Wollongong,
NSW, 2522, Australia}
\email{glenw@uow.edu.au}
\date{13 September 2026}
\subjclass[2020]{53C20, 49Q10, 53C42}
\keywords{Cartan--Hadamard manifold, isoperimetric inequality,
sectional curvature pinching, curvature variation, finite perimeter}
\hypersetup{
 pdftitle={The Cartan-Hadamard isoperimetric inequality under controlled variation of curvature},
 pdfauthor={Glen Wheeler},
pdfsubject={Sharp Euclidean isoperimetry under global and variable-scale negative sectional curvature pinching},
 pdfkeywords={Cartan-Hadamard, isoperimetric inequality, curvature pinching, curvature variation}
}

\begin{document}
\begin{abstract}
We prove the Cartan-Hadamard conjecture (the sharp Euclidean
isoperimetric inequality) in every dimension, assuming that the sectional curvature is pinched relative to a reference radius of curvature.
The reference radius of curvature may vary, and the condition includes examples
where the radius of curvature blows up, and the manifold has both unbounded
(negative) and asymptotically vanishing sectional curvature.
The key step in the proof is to control the entire convex hull of a quotient
minimiser at its mean-curvature scale and compare the metric distortion
with the surplus in the normalised hyperbolic isoperimetric profile
at that same scale.
\end{abstract}
\maketitle

\section{Introduction}
\label{sec:intro}

Let $(M^n,g)$ be a Cartan--Hadamard manifold; that is, a complete,
simply connected Riemannian manifold with nonpositive sectional
curvature. For a measurable set $E\subset M$ we denote its volume by
$\Vol_g(E)$ and its full ambient perimeter by $\Per_g(E)$. We write
$\omega_n$ for the volume of the unit ball in $\R^n$. The
Cartan--Hadamard isoperimetric conjecture asserts that
\begin{equation}
 \Per_g(E)^n\geq n^n\omega_n\Vol_g(E)^{n-1}
 \label{eq:CH}
\end{equation}
for every bounded set of finite perimeter. The geometric content of
\eqref{eq:CH} is that nonpositive curvature cannot decrease the least
boundary area required to enclose a prescribed volume below its
Euclidean value.

The inequality is known without further curvature assumptions in
dimensions two, three and four; see \cite{Chavel,Croke,Kleiner}.
Chen, Ghomi and Wang \cite{ChenGhomiWang} have now established it
in dimension five for bounded sets of positive volume and finite
perimeter, with equality only for sets isometric to Euclidean balls,
up to sets of measure zero.
Thus, our main theorem (Theorem \ref{thm:main}) is of interest for $n\ge6$.

Kloeckner and Kuperberg \cite{KloecknerKuperberg} give a unified
treatment in dimensions two and four under weaker candle conditions.
Sharp comparisons for small volumes under scalar-curvature and
$C^3$ bounded-geometry hypotheses were obtained by Nardulli and
Osorio Acevedo \cite{NardulliOsorio}. Ghomi and Stavroulakis
\cite{GhomiStavroulakisLocal} prove a local result in every dimension:
a sufficiently small $C^2$ perturbation of the Euclidean metric on
a ball, with nonpositive sectional curvature, cannot decrease that
ball's isoperimetric ratio.

Ghomi and Spruck \cite{GS} established in all dimensions the implication
from a sharp lower bound for the total Gauss--Kronecker curvature of
convex hypersurfaces to \eqref{eq:CH}. In \cite{GhomiPinching} Ghomi proved
that this total-curvature bound holds in every dimension $n\geq4$ for convex
hypersurfaces of sufficiently small diameter
relative to the ambient curvature scale. Our proof uses direct metric
comparison and does not require this total-curvature bound.

Our main theorem is: 

\begin{theorem}
\label{thm:main}
For each integer $n\geq2$ there are constants $\varepsilon_n\in(0,1/2]$ and
$c_n>0$ with the following property.
If $(M^n,g)$ has $(\varepsilon_n,c_n)$-controlled variation of curvature, then
every bounded set $E\subset M$ of finite perimeter and positive volume
satisfies \eqref{eq:CH}.
\end{theorem}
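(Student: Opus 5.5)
We will argue by contradiction through a quotient minimisation, in the spirit of the abstract. Consider the isoperimetric quotient $\cQ(E)=\Per_g(E)^n/\Vol_g(E)^{n-1}$ and suppose $\inf\cQ<n^n\omega_n$ over bounded sets of finite perimeter. We first show that the infimum is attained, or at least almost attained, by a bounded minimiser $\Omega$ with smooth boundary away from a singular set of codimension at least $8$. The controlled-variation hypothesis gives local bounded geometry at the reference scale, and the Cartan--Hadamard structure lets us truncate: intersecting with sublevel sets of a distance function, which is convex, does not increase the quotient. These two facts rule out loss of mass at infinity in the usual concentration--compactness argument. The first variation then shows that $\Omega$ has constant mean curvature $H=\frac{n-1}{n}\,\Per(\Omega)/\Vol(\Omega)$, and comparison with a small geodesic ball gives $\cQ(\Omega)\le n^n\omega_n$.

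The central step is a localisation at the mean-curvature scale $r=(n-1)/H$. Using the convexity of distance functions together with the maximum principle against geodesic spheres, we show that the convex hull $\conv(\Omega)$ has diameter $\lesssim r$. We then compare $r$ with the reference radius of curvature $\rho$ at a base point $p\in\conv(\Omega)$. The $(\varepsilon_n,c_n)$-controlled variation condition should give
\[
-(1+\varepsilon_n)^2\rho^{-2}\le K\le-(1-\varepsilon_n)^2\rho^{-2}
\]
on the whole ball of radius $C_n r$ about $p$ (this is where $c_n$ enters, bounding the variation of $\rho$ over that ball). By Rauch comparison, the exponential map at $p$ is then a $(1+C\varepsilon_n\,\phi(r/\rho))$-bi-Lipschitz map onto the hyperbolic ball of curvature $-\rho^{-2}$, where $\phi(t)$ grows at most like $t$ or $t^2$ and stays controlled on the relevant range.

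Transport $\Omega$ to this model space. The hyperbolic isoperimetric profile, normalised by $n^n\omega_n$, exceeds $1$ by a surplus $\sigma(r/\rho)>0$; this surplus behaves like $c(r/\rho)^2$ for small $r/\rho$ and grows with $r/\rho$. The distortion changes $\cQ$ by a factor of at most $(1+C\varepsilon_n\,\phi(r/\rho))^{2n-1}$. The main obstacle is to show that the surplus always beats the distortion, uniformly in $r/\rho\in(0,\infty)$:
\[
(1+\sigma(t))\,(1+C\varepsilon_n\phi(t))^{-(2n-1)}\ge1 .
\]
For small $t$ the distortion is already $O(\varepsilon_n t^2)$, because the curvature is pinched around a nonzero value and Rauch's error is second order. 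For large $t$ the surplus grows exponentially while the bi-Lipschitz constant stays close to $1$, since $K$ is pinched. Choosing $\varepsilon_n$ small settles both regimes.

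The inequality above yields $\cQ(\Omega)\ge n^n\omega_n$, which contradicts the assumption, and so \eqref{eq:CH} holds. If $\rho$ may be infinite in flat regions, we treat $r/\rho\to0$ as a limit; the Euclidean comparison is then exact up to $O(\varepsilon_n)$ second-order terms.
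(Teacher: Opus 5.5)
Your outline has the paper's architecture: a quotient minimiser, its hull at the mean-curvature scale, Rauch comparison with $\HH^n$, and the hyperbolic surplus absorbing the distortion. However, the steps that carry the weight are missing or incorrect.

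\textbf{Uniformity in $t$.} Your resolution of the ``main obstacle'' uniformly in $t=r/\rho\in(0,\infty)$ does not work. The hypothesis bounds the oscillation of $\ell=\kappa^{-1/2}$ over a ball of radius $C_nr$ only by $\tau C_nr=\tau C_nt\rho$, where $\tau=\Lip_g\ell\le\varepsilon_n/c_n$. So you get pinching on that ball only for bounded $t$.

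Even granting pinching, the Rauch distortion is far from $1$ for large $t$. After normalising to $-1\le\sec_g\le-\lambda$, $\log\alpha_\lambda(R)\approx-(1-\sqrt\lambda)R$ decreases linearly in a diameter $R=O(r)$. The surplus is at most $\log Q_h(\omega_nr^n)$, which grows only like $n\log r$. No fixed $\varepsilon_n$ closes this.

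The missing idea is that large $t$ never occurs. If $\sec_g\le-\Lambda$ on $C$, then $\Delta r\ge(n-1)\sqrt\Lambda$ for the distance from $p\in\Omega$. Gauss--Green on $\Omega\setminus B_t(p)$ together with $P=nHV/(n-1)$ gives $H\ge(n-1)^2\sqrt\Lambda/n$.

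The paper applies this with $\Lambda=(1-\eta)k_-$, where $k_-=\min_C\kappa$. This is legitimate because $\sec_g\le-(1-\eta)\kappa\le-(1-\eta)k_-$ throughout $C$. It yields $\sqrt{k_-}\,\diam C\le nd_n/((n-1)^2\sqrt{1-\eta})$. Only then does $\max_C\ell-\min_C\ell\le\tau\diam C$ become $(1-\eta)k_-/k_+\ge(\sqrt{1-\eta}-c_n\tau)^2\ge1-\varepsilon_n$ on $C$. After rescaling it also gives $a\le T_n$, so only the compact range of Lemma~\ref{lem:margin} is needed.

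Your maximum principle against geodesic spheres is a possible route to this bound. At the point of $\Gamma$ farthest from $p$ it gives $H\ge(n-1)\sqrt\Lambda\coth(\sqrt\Lambda s)$, modulo justifying the touching at a possibly non-smooth point. You used it for the diameter instead.

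\textbf{The diameter bound.} The maximum principle cannot bound the diameter. Touching by an enclosing sphere only shows that $\Omega$ does \emph{not} fit in a ball of radius less than $(n-1)/H$. A long capsule (a cylinder of radius $(n-1)/H$ capped by hemispheres) is connected, satisfies $|\Hvec|\le H$, and has arbitrarily large diameter.

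The paper's bound $\diam C=\diam\Gamma\le d_n\theta/H$ uses three ingredients:
\begin{itemize}
\item the area identity $P=s_{n-1}\theta((n-1)/H)^{n-1}$ forced by quotient minimality, which is what rules out such examples;
\item the monotonicity bound $\mu(B_{1/H}(x))\ge\omega_{n-1}\e^{-1}H^{1-n}$ at every $x\in\Gamma$;
\item a chain-of-balls covering that requires $\Gamma$ to be connected.
\end{itemize}
Connectedness itself needs proof: one shows that extra components of $\Omega$, or bounded components of its exterior, would strictly lower the quotient. The factor $\theta$ in the bound is what keeps $R^2/a^2\le d_n^2/(n-1)^2$. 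Your surplus $\sigma(r/\rho)$ tacitly assumes $\Vol_g(\Omega)\approx\omega_nr^n$, which you never justify.

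\textbf{Existence of a minimiser.} The existence step fails as stated, for three reasons.
\begin{itemize}
\item Intersecting with a convex sublevel set does not increase perimeter, but it does decrease volume, so the quotient can go up. A Euclidean ball cut by a large ball through its centre is an example.
\item An almost-minimiser has no usable first variation.
\item With curvature tending to $0$ at one end and $-\infty$ at the other (Remark~\ref{rem:varyingscale}), there is no uniform geometry at infinity for concentration--compactness.
\end{itemize}
The paper never needs a global minimiser. It minimises among subsets of a large ball $B\supset E$. It rules out degenerate volumes using the local $BV$ inequality and $\cQ_g(B_R)\ge1$. It accepts only $|\Hvec|\le H$ on the obstacle contact set, which is all the monotonicity argument uses.
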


The definition of $(\varepsilon_n,c_n)$-controlled variation of curvature is as follows.

\begin{definition}
\label{def:controlled_curvature_variation}
Let $(M^n,g)$ be a smooth Cartan-Hadamard manifold.
We say that $(M^n,g)$ has \emph{$(\varepsilon_n,c_n)$-controlled variation of curvature} if there exists a positive continuous function $\kappa$ and $\eta\in[0,1)$ with the following properties:
\begin{enumerate}
\item[(i)] $\ell=\kappa^{-1/2}$ is Lipschitz with
\begin{equation}
 \sqrt{1-\eta}-c_n\Lip_g(\kappa^{-1/2})
 \geq\sqrt{1-\varepsilon_n}\text{, and}
 \label{eq:jointcriterion}
\end{equation}
\item[(ii)] For every $x\in M$ and every two-plane $\sigma\subset T_xM$,
\begin{equation}
 -\kappa(x)\leq\sec_g(\sigma)\leq-(1-\eta)\kappa(x).
 \label{eq:pointwisepinching}
\end{equation}
\end{enumerate}
We call the quantity $\ell$ the \emph{reference curvature radius}, and $\kappa$ the \emph{reference curvature}.
\end{definition}

The Lipschitz constant of $\ell$ is invariant under constant metric rescaling.
For $C^1$ functions, $\Lip_g\ell\leq\tau$ is equivalent to
$|\nabla\kappa|\leq2\tau\kappa^{3/2}$. 

We now make a number of remarks.

\begin{remark}
\label{rem:varyingscale}
Taking $M=\R\times\R^{n-1}$, $n\geq3$, with the warped product metric
\[
 g=\dd t^2+f(t)^2g_{\R^{n-1}},\qquad
 f(t)=\exp\left(\int_0^t\frac{\dd s}{L(s)}\right),\qquad
 L(t)=\sqrt{1+\delta^2t^2}-\delta t,
\]
gives examples with asymptotically vanishing and unbounded negative
sectional curvature on the same manifold. Here $\delta>0$ is a
small parameter. The function $L$ is positive and smooth, with
$-2\delta<L'<0$, and differentiating $f$ gives
$f'/f=L^{-1}$ and $f''/f=(1-L')L^{-2}$.
The sectional curvatures of planes tangent to the Euclidean factor
and planes containing $\partial_t$ are, respectively,
\[
 K_{\mathrm{tan}}=-L^{-2},\qquad
 K_{\mathrm{rad}}=-(1-L')L^{-2}.
\]
The curvature operator is diagonal on radial and tangential
two-forms, so the sectional curvature of every two-plane is a convex
combination of these values.
Thus, taking
\[
 \kappa=(1+2\delta)L^{-2},\qquad
 \eta=\frac{2\delta}{1+2\delta},
\]
we have
\[
 -\kappa\leq\sec_g\leq-\frac{\kappa}{1+2\delta}<0,\qquad
 |\nabla(\kappa^{-1/2})|
 =\frac{|L'|}{\sqrt{1+2\delta}}
 \leq\frac{2\delta}{\sqrt{1+2\delta}}.
\]
Thus \eqref{eq:pointwisepinching} holds and
$\Lip_g(\kappa^{-1/2})\leq2\delta/\sqrt{1+2\delta}$.
In particular, \eqref{eq:jointcriterion} holds whenever
$0<\delta\leq\varepsilon_n/[2(1+2c_n)]$, since
\[
 \sqrt{1-\eta}-c_n\Lip_g(\kappa^{-1/2})
 \geq\frac{1-2c_n\delta}{\sqrt{1+2\delta}}
 \geq1-(1+2c_n)\delta
 \geq1-\frac{\varepsilon_n}{2}
 \geq\sqrt{1-\varepsilon_n}.
\]
The metric is complete: escape in the $t$ direction has infinite
length, while on each bounded $t$ interval the fibre metric has
a positive Euclidean lower bound. It is also simply connected,
and hence is Cartan--Hadamard. As $t\to-\infty$ we have
$L(t)\to\infty$, so all sectional curvatures tend to zero.
As $t\to+\infty$ we have $L(t)\to0$, so all sectional curvatures
tend to $-\infty$. In particular, no constants $0<a\leq b<\infty$
give global bounds $-b^2\leq\sec_g\leq-a^2$. This shows that
Theorem~\ref{thm:main} applies beyond global negative pinching.
For $n\geq6$ these examples illustrate the additional isoperimetric
range of the theorem; in dimensions three through five the
isoperimetric conclusion already holds without curvature pinching.
\end{remark}

\begin{remark}
Taking $\kappa\equiv b^2$ and $\eta=\varepsilon_n$ gives the
global pinching case
\begin{equation}
 -b^2\leq\sec_g\leq-(1-\varepsilon_n)b^2.
 \label{eq:pinching}
\end{equation}
The curvature radius is then constant, so
\eqref{eq:jointcriterion} is automatic and no bounds on derivatives
of curvature are required.
\end{remark}

\begin{remark}
\label{rem:constants}
The dimension-dependent constants $\varepsilon_n$ and $c_n$ may be
chosen as follows. Put $s_{n-1}=n\omega_n$, and set
\[
 d_n=\frac{6\e (n-1)^{n-1}s_{n-1}}{\omega_{n-1}}.
\]
With $T_n=\sqrt2\,n/(n-1)$, an admissible choice is
\[
 \begin{aligned}
 c_n&=\frac{n d_n}{(n-1)^2}
     =\frac{6\e n^2(n-1)^{n-3}\omega_n}{\omega_{n-1}},\\
 \varepsilon_n&=\min\left\{\frac12,
 \frac{3(n-1)^2}{(n+2)\cosh T_n(1+\cosh T_n)d_n^2}\right\}.
 \end{aligned}
\]
These expressions give $c_n>0$ and $0<\varepsilon_n\leq1/2$.
All quantities are specified by the dimension. We do not optimise
these choices.
\end{remark}

\begin{remark}
The proof of Theorem~\ref{thm:main} uses the bounds for curvature
and the variation of $\kappa^{-1/2}$ only on the selected hull $C$.
In particular, if $\kappa$ and the same hypotheses are given on a
closed geodesic ball $\overline B$, then every finite-perimeter
set in $B$ satisfies \eqref{eq:CH}. One minimises in $B$ and
argues as in Corollary~\ref{cor:localball}.
\end{remark}

We now give some words on the proof.
The key new idea in our proof is to exploit control of the entire convex hull
of a minimiser of the isoperimetric quotient at the mean-curvature scale
in a quantitative hyperbolic comparison.
We use the constrained minimisation and convex-hull framework of Ghomi--Spruck,
minimising the quotient over subsets of a compact carrier and allowing the
volume to vary. The resulting relation between perimeter, volume and mean
curvature makes it possible to control the entire candidate before choosing a
comparison scale.
Write
\begin{equation}
 \cQ_g(E)=\frac{\Per_g(E)^n}{n^n\omega_n\Vol_g(E)^{n-1}}.
 \label{eq:quotient}
\end{equation}
A set with quotient less than one gives a quotient minimiser
$\Omega$ among subsets of a sufficiently large geodesic ball. Writing
$P=\Per_g(\Omega)$, $V=\Vol_g(\Omega)$ and $H=(n-1)P/(nV)$, we show
that its boundary has connected support and generalised mean-curvature
vector bounded by $H$; see Proposition~\ref{prop:minimiser}.
Writing $\theta=\cQ_g(\Omega)<1$, monotonicity then gives
\[
 \diam C\leq\frac{d_n\theta}{H},\qquad
 \Vol_g(\Omega)=\omega_n\theta\left(\frac{n-1}{H}\right)^n,
 \qquad C=\conv(\partial\Omega).
\]
After rescaling to $-1\leq\sec_g\leq-\lambda$, with
$\lambda\geq1/2$, Rauch comparison controls the metric on the hull
in hyperbolic exponential coordinates. Its logarithmic distortion is
bounded by a constant times $(1-\lambda)(\diam C)^2$.
The logarithm of the Euclidean-normalised hyperbolic isoperimetric
profile is bounded below by a positive dimensional constant times
$\Vol_g(\Omega)^{2/n}$ on the relevant volume range. The two
displayed scale estimates allow the latter quantity to absorb the
distortion when the pinching is sufficiently tight. This comparison
holds at every scale, including as the volume tends to zero.

This differs from the argument of Chen--Ghomi--Wang
\cite{ChenGhomiWang}, whose chord-integral identities and Jacobi-field
estimates yield a sharp area bound for constant-mean-curvature
hypersurfaces in dimension five. Their extension to hypersurfaces
confined by a geodesic ball directly excludes the quotient minimiser
in that dimension; we give the short comparison after
Proposition~\ref{prop:minimiser}. Our quantitative comparison at the
mean-curvature scale applies in every dimension and proves the
isoperimetric inequality without a total-curvature estimate for
convex hypersurfaces.

The localisation is also what permits a varying curvature scale.
All the strict curvature bounds in the contradiction argument are
needed only on the convex hull $C$. If
$k_-=\min_C\kappa$ and $k_+=\max_C\kappa$, radial comparison gives
$H\geq(n-1)^2\sqrt{(1-\eta)k_-}/n$. Together with the diameter bound,
this converts a small Lipschitz constant for $\kappa^{-1/2}$ into pinching
throughout $C$.

The article is organised as follows.
In Section~\ref{sec:prelim} we fix notation and conventions.
Section~\ref{sec:minimiser} records the required regularity and
establishes the constrained minimisation and scale estimates.
Section~\ref{sec:comparison}
contains the hyperbolic comparison and the exclusion argument on the
convex hull. In Section~\ref{sec:proof} we complete the proof of
Theorem~\ref{thm:main}.

\section{Notation and preliminary estimates}
\label{sec:prelim}

Throughout the paper $s_{n-1}=|\SSph^{n-1}|=n\omega_n$, and all
distances and diameters are ambient. A convex body is a compact
geodesically convex set with nonempty interior. We use $\conv(X)$ to
denote the closed convex hull of $X$. Closed metric balls are convex
and compact in a Cartan--Hadamard manifold, so the hull of a bounded
set is compact.

For a set $E$ of finite perimeter, $\partial^*E$ denotes its reduced
boundary and $\nu_E$ its measure-theoretic outward unit normal. Thus
\[
 \Per_g(E)=\cH_g^{n-1}(\partial^*E).
\]
The perimeter here includes any portion supported on the boundary of
a constraining ball. We use the compactness, lower semicontinuity,
Gauss--Green and diffeomorphism variation theorems for finite-perimeter
sets in their local Riemannian form; these follow in smooth charts
from the corresponding Euclidean results in \cite{Maggi}.

On an oriented regular hypersurface $\Gamma$ we take the shape operator to be
$A(X)=\nabla_X\nu$ and the scalar mean curvature to be
$H_\Gamma=\tr A$. These conventions give positive principal
curvatures on outward-oriented Euclidean spheres. If $W$ is the
multiplicity-one varifold associated to $\Gamma$ with weight measure $\mu$, its
first variation is
\[
 \delta W(X)=\int\diver_{T_xW}X\dd\mu(x).
\]
Our convention for its generalised mean-curvature vector is
$\delta W(X)=-\int\langle\Hvec,X\rangle\dd\mu$ when the first
variation is absolutely continuous. On a regular hypersurface,
$\Hvec=-H_\Gamma\nu$.

\section{Constrained minimisers and scale estimates}
\label{sec:minimiser}

We first record the regularity of an isoperimetric region in an
open geodesic ball $B$. By \cite[Lemma 7.2]{GS}, such a region has an open
representative $\Omega$ for which $\Gamma=\partial\Omega$ is smooth in $B$
outside a closed singular set $S$ of Hausdorff dimension at most $n-8$. The
free regular boundary has constant scalar mean curvature $H_0$. Near $\partial
B$, the boundary is $C^{1,1}$ and $H_\Gamma\leq H_0$ almost everywhere.
In particular $S$ is empty when $n\leq7$. The standard density estimates (see
\cite[Theorem 21.11]{Maggi} for the Euclidean statement), together with the
preceding regularity and the dimension bound on $S$, give
\begin{equation}
 \Gamma=\spt(\cH_g^{n-1}\llcorner\partial^*\Omega),
 \qquad
 M\setminus\Omega=\overline{M\setminus\overline\Omega},
 \qquad \cH_g^{n-1}(S)=0.
 \label{eq:representative}
\end{equation}
We now associate to any violation of \eqref{eq:CH} a constrained
minimiser with controlled first variation and connected boundary.
The connectedness assertion in the next proposition will be used in
the covering argument of Lemma~\ref{lem:packing}.

\begin{proposition}
\label{prop:minimiser}
Suppose that $\cQ_g(E)<1$ for some bounded finite-perimeter set of
positive volume in a Cartan--Hadamard manifold $M^n$. There is an
open geodesic ball $B$ and a quotient minimiser $\Omega\subset B$,
with
\[
 P=\Per_g(\Omega),\qquad V=\Vol_g(\Omega),\qquad
 \theta=\cQ_g(\Omega)<1,\qquad H=\frac{(n-1)P}{nV}>0,
\]
whose perimeter varifold $W$ has compact connected support
$\Gamma=\partial\Omega$. Its first variation satisfies
\begin{equation}
 \delta W=-\Hvec\mu,\qquad |\Hvec|\leq H\quad\mu\text{-almost everywhere}.
 \label{eq:meanbound}
\end{equation}
The convex body $C=\conv(\Gamma)$ contains $\Omega$. Furthermore,
\begin{equation}
 P=s_{n-1}\theta\left(\frac{n-1}{H}\right)^{n-1},\qquad
 V=\omega_n\theta\left(\frac{n-1}{H}\right)^n.
 \label{eq:PV}
\end{equation}
\end{proposition}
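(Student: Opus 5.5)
We follow the Ghomi--Spruck constrained minimisation. Fix $E$ with $\cQ_g(E)<1$ and choose an open geodesic ball $B$ with $\overline E\subset B$. Let $\theta$ be the infimum of $\cQ_g$ over finite-perimeter subsets of $B$ with positive volume, so $\theta\le\cQ_g(E)<1$. The first task is existence of a minimiser, and here compactness in $\overline B$ is standard. The danger is a minimising sequence whose volume tends to zero. This we exclude because small-volume sets in a compact region of a smooth manifold satisfy an almost-Euclidean isoperimetric inequality: $\cQ_g(F)\ge 1-o(1)$ as $\Vol_g(F)\to0$, uniformly in $\overline B$, using the bounded geometry of $\overline B$. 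Since $\theta<1$, volumes stay bounded below. Lower semicontinuity of perimeter together with $L^1$ convergence of volume then gives a minimiser $\Omega$. If $\Omega$ has several indecomposable components, we note that $\cQ$ is strictly concave-like in volume: for components with $P=P_1+P_2$ and $V=V_1+V_2$, one of them has quotient no larger than $\cQ_g(\Omega)$, with strict inequality unless one part is null. Hence we may take $\Omega$ indecomposable. Since $\Omega$ minimises perimeter among subsets of $B$ of its own volume, the regularity recalled above from \cite[Lemma 7.2]{GS} applies: $\Omega$ has the open representative with $\Gamma=\partial\Omega$ as in \eqref{eq:representative}.

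Next we identify $H_0$. We vary $\Omega$ by the flow of a vector field $X$ supported in $B$ away from $\partial B$. Stationarity of $\log\cQ_g$ gives $n\,\delta P/P=(n-1)\,\delta V/V$. With $\delta P=\int H_\Gamma\langle X,\nu\rangle$ and $\delta V=\int\langle X,\nu\rangle$, this yields $H_0=(n-1)P/(nV)=H$. On the part of $\Gamma$ touching $\partial B$, the recalled bound $H_\Gamma\le H_0$ a.e. and the $C^{1,1}$ regularity give an absolutely continuous first variation there. Moreover, one-sided minimality (inward variations only) forces $H_\Gamma\ge0$ there; this follows by comparing with the convexity of $\partial B$. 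Together with $\cH^{n-1}_g(S)=0$ and the codimension of $S$ being at least $7$, so that $S$ is removable for the first variation by a standard cutoff, this gives $\delta W=-\Hvec\mu$ with $|\Hvec|\le H$. Formula \eqref{eq:PV} is then algebra: from $P^n=\theta n^n\omega_nV^{n-1}$ and $V=(n-1)P/(nH)$ we solve for $P$ and obtain $P^{\,}=\theta n\omega_n((n-1)/H)^{n-1}$, and then $V$ follows.

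For connectedness of $\Gamma$ we use the indecomposability of $\Omega$ and the fact that $M\setminus\Omega$ contains the unbounded component only. If $\Gamma$ had two components, then $\Omega$ would have a cavity: an interior boundary bounding a hole $U$. In that case, filling $U$ (replacing $\Omega$ by $\Omega\cup U$) lowers $P$ and raises $V$, so $\cQ_g$ strictly decreases, contradicting minimality. One still needs $\Omega\cup U\subset B$; this holds because $B$ is convex and $U$ is enclosed. Thus $\Gamma$ is the boundary of a connected set with connected complement, and it is connected; we argue this via the density estimates and the structure in \eqref{eq:representative}.

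Finally, for $\Omega\subset C=\conv(\Gamma)$, we argue as follows. If $x\in\Omega\setminus C$, project onto the convex set $C$. Since $M$ is Cartan--Hadamard, $\dist(\cdot,C)$ is convex, and its superlevel sets escape to infinity. A geodesic ray from $C$ through $x$ realising increasing distance must leave the bounded set $\Omega$, so it crosses $\Gamma\subset C$, which is a contradiction. The main obstacle I expect is the first variation at the contact set with $\partial B$ combined with the singular set, that is, justifying $|\Hvec|\le H$ globally as a varifold statement. We handle this by relying on the cited $C^{1,1}$ obstacle regularity and a capacity argument for $S$.
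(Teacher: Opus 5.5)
Your outline follows the paper's route closely: Ghomi--Spruck minimisation in a ball, small-volume exclusion, stationarity on the free part, a barrier argument on $\Gamma\cap\partial B$, concavity and hole-filling for connectedness, and geodesic exits for $\Omega\subset C$. However, the existence step has a gap at the other endpoint. You only exclude $\Vol_g(F_k)\to0$ along a minimising sequence. Nothing you say prevents $\Vol_g(F_k)\to\Vol_g(B)$, in which case the $L^1$ limit is $B$ itself and lower semicontinuity only gives $\cQ_g(B)\le\theta<1$. This matters for what follows. If $\Omega=B$ there is no free interface, so your stationarity computation with $X$ supported away from $\partial B$ is vacuous. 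Then $H_0$ is never identified with $H$, and the contact bound $H_\Gamma\le H_0$ that you import from the regularity theorem has no content. The paper closes this by proving $\cQ_g(B_R)\ge1$ for every geodesic ball in a Cartan--Hadamard manifold. For a perpendicular Jacobi field, $f=|Y_v|$ satisfies $ff''\ge0$, so $f(r)/r$ is nondecreasing. Hence $\mathcal J(r,\xi)/r^{n-1}$ is nondecreasing with limit one, which gives $A(R)\ge s_{n-1}R^{n-1}$ and $\Vol_g(B_R)\le RA(R)/n$, and so $\cQ_g(B_R)\ge A(R)/(s_{n-1}R^{n-1})\ge1$. Then $0<V<\Vol_g(B)$, and the free part is nonempty because a proper positive-volume subset of the connected ball has an interior interface, so $H_0=H$ follows. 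Alternatively, you could treat $\Omega=B$ separately: inward normal variations of $\partial B$ give $H_{\partial B}\le H$, and the stated conclusions then hold directly. One of the two arguments must be supplied.

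A smaller point: on $\Gamma\cap\partial B$ the lower bound $H_\Gamma\ge0$ does not come from one-sided minimality. Inward variations with speed $\phi\ge0$ only give $\int(H-H_\Gamma)\phi\dd\mu\ge0$, which is the upper bound. The lower bound is a barrier statement. Write $\Gamma$ and $\partial B$ as $C^{1,1}$ graphs $u$ and $v$ that coincide on the contact set. Then $Du=Dv$ on the contact set and $D^2u=D^2v$ almost everywhere on it, so $H_\Gamma=H_{\partial B}\ge0$ there. Your remark about the convexity of $\partial B$ is the correct mechanism.

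The other differences are harmless. The paper obtains the stationarity identity for the perimeter varifold on all of $B$ directly from the first-variation formula for finite-perimeter sets under flows, so no cutoff around $S$ is needed. Your claim that a disconnected $\Gamma$ forces a cavity is the Czarnecki--Kulczycki--Lubawski theorem cited in the paper (here a consequence of the unicoherence of $\R^n$), and it should be invoked explicitly rather than asserted. For $\Omega\subset C$ no projection is needed, since the two first exits from $\Omega$ of any geodesic through $x$ already lie on $\Gamma\subset C$.
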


\begin{proof}
Choose $B=B_R(o)$ containing $E$ compactly and minimise $\cQ_g$
over its positive-volume finite-perimeter subsets. We first exclude
loss of volume at either endpoint.

On a fixed compact smooth neighbourhood of $\overline B$, the local
Euclidean $BV$ inequality and a partition of unity give, for every
$\eta>0$, a constant $c_\eta$ such that
\begin{equation}
 n\omega_n^{1/n}\Vol_g(F)^{(n-1)/n}
 \leq(1+\eta)\Per_g(F)+c_\eta\Vol_g(F)
 \label{eq:localBV}
\end{equation}
for all $F\subset B$ of finite perimeter.
To see this: Choose finitely many charts with metric distortion sufficiently
small in terms of $\eta$, and a nonnegative subordinate partition
$\{\phi_j\}$ whose sum is one near $\overline B$. Apply the
Euclidean inequality to each $\phi_j\chi_F$ and sum. The triangle
inequality in $L^{n/(n-1)}$ bounds the left-hand side, the terms
$\phi_j|D\chi_F|$ sum to the perimeter, and the derivatives of
$\phi_j$ contribute at most $c_\eta\Vol_g(F)$. This proves
\eqref{eq:localBV}. Dividing by $\Vol_g(F)^{(n-1)/n}$, letting the
volume tend to zero and then $\eta\downarrow0$, we obtain
\[
 \liminf_{\Vol_g(F)\to0}\cQ_g(F)\geq1.
\]

At the other endpoint, $\chi_F\to\chi_B$ in $L^1$ as
$\Vol_g(F)\to\Vol_g(B)$, so lower semicontinuity reduces the
question to $\cQ_g(B)\geq1$. We verify this directly from the Jacobi
equation. In geodesic polar coordinates about $o$, write
\[
 \dd\Vol_g=\mathcal J(r,\xi)\dd r\dd\sigma(\xi),\qquad
 A(r)=\cH_g^{n-1}(\partial B_r(o))
     =\int_{\SSph^{n-1}}\mathcal J(r,\xi)\dd\sigma(\xi),
\]
where $\xi$ is a unit vector in $T_oM$ and $\dd\sigma$ is the
standard spherical measure. Thus $\mathcal J$ is the radial volume
density; its Euclidean value is $r^{n-1}$.

Fix $\xi$ and let $\gamma(r)=\exp_o(r\xi)$. For $v\perp\xi$, let
$Y_v$ be the perpendicular Jacobi field with $Y_v(0)=0$ and
$D_rY_v(0)=v$, where $D_r$ denotes covariant differentiation along
$\gamma$. For $v\ne0$ the field has no further zero, since $M$ is
Cartan--Hadamard. The Jacobi equation and Cauchy--Schwarz give, with
$f=|Y_v|$,
\[
 ff''=|D_rY_v|^2-(f')^2
       -\sec_g(\dot\gamma\wedge Y_v)f^2\geq0.
\]
Thus $f$ is convex and $f(0)=0$, so $f(r)/r$ is nondecreasing.
Choose an orthonormal basis $e_1,\ldots,e_{n-1}$ of $\xi^\perp$.
Applying this to every $v\in\xi^\perp$ shows that the positive
definite matrix
\[
 G(r)=\bigl(r^{-2}\langle Y_{e_i}(r),Y_{e_j}(r)\rangle\bigr)_{i,j=1}^{n-1}
\]
is nondecreasing as a quadratic form. Its determinant is therefore
nondecreasing, and $G(r)\to I$ as $r\downarrow0$. Consequently
\[
 \frac{\mathcal J(r,\xi)}{r^{n-1}}=\sqrt{\det G(r)}
\]
is nondecreasing in $r$, with limit one at the origin.
Integrating over $\xi$ gives
$A(r)\leq(r/R)^{n-1}A(R)$ for $0<r\leq R$ and
$A(R)\geq s_{n-1}R^{n-1}$. Hence
\[
 \Vol_g(B_R)=\int_0^R A(r)\dd r\leq\frac{R}{n}A(R),
\]
and therefore
\[
 \cQ_g(B_R)\geq\frac{A(R)}{s_{n-1}R^{n-1}}\geq1.
\]
Since the infimum is less than one, this implies that a minimising sequence
must have volume bounded away from zero and $\Vol_g(B)$. Its perimeters are
bounded, so $BV$ compactness and lower semicontinuity give a minimiser $\Omega$
with $0<V<\Vol_g(B)$ and quotient $\theta<1$.  It also minimises perimeter at
volume $V$. We may assume that $\Omega$ is open: By the regularity and density
results recalled above, modifying $\Omega$ on a set of zero $g$-volume so
that it is open and $\Gamma=\partial\Omega$ has the stated regularity and
satisfies \eqref{eq:representative} is valid, and leaves its volume, perimeter
and quotient unchanged.

Every smooth vector field compactly supported in $B$ generates
two-sided admissible variations. Differentiating \eqref{eq:quotient}
under its flow yields
\begin{equation}
 \delta W(X)=H\int_{\partial^*\Omega}
                   \langle X,\nu_\Omega\rangle\dd\mu.
 \label{eq:stationary}
\end{equation}
This is an identity for the perimeter varifold on all of $B$,
including its singular points. On the free regular part it gives
$H_\Gamma=H$. That part is nonempty, since a proper positive-volume
region in the connected ball has an interior interface of positive
perimeter. Thus the constant $H_0$ in the regularity theorem is $H$.

We now conduct a mean curvature comparison argument.
Near $\partial B$ we have $C^{1,1}$ regularity and
$H_\Gamma\leq H$ almost everywhere. At obstacle contact write the
two hypersurfaces as $C^{1,1}$ graphs $u$ and $v$. The function
$u-v$ has one sign and vanishes on the contact set, so $Du=Dv$
there. A Lipschitz function has derivative zero almost everywhere on
each of its level sets; applied to $Du-Dv$, this also gives
$D^2u=D^2v$ almost everywhere on contact. The two outward normals
agree. Therefore
\[
 H_\Gamma=H_{\partial B}\geq0
 \quad\text{almost everywhere on }\Gamma\cap\partial B.
\]
Together with $H_\Gamma=H>0$ on the free part, this proves
$0\leq H_\Gamma\leq H$ almost everywhere. A $C^{1,1}$
hypersurface has absolutely continuous first variation.
Patching its first-variation formula near $\partial B$ with
\eqref{eq:stationary} proves \eqref{eq:meanbound} globally.

We next prove connectedness. Each open component $\Omega_i$ of
$\Omega$ has boundary contained in $\Gamma$, which has finite
$\cH_g^{n-1}$ measure, and consequently has finite perimeter. At every
regular boundary point the connected interior side of a separating
graph patch belongs to exactly one component. The exceptional set
has $\cH_g^{n-1}$ measure zero. Thus, modulo $\cH_g^{n-1}$-null sets,
\[
 \partial^*\Omega=\mathop{\dot\bigcup}_i\partial^*\Omega_i,
 \qquad \nu_{\Omega_i}=\nu_\Omega
 \quad\text{on }\partial^*\Omega_i.
\]
There are at most countably many open components, so perimeter and
volume are additive:
\[
 P=\sum_iP_i,\qquad V=\sum_iV_i.
\]
Quotient minimality gives
\[
 P_i\geq n\omega_n^{1/n}\theta^{1/n}V_i^{(n-1)/n}.
\]
If there were two nonempty components, the strict inequality
$\sum_iV_i^{(n-1)/n}>(\sum_iV_i)^{(n-1)/n}$ would contradict the
definition of $\theta$. Thus $\Omega$ is connected.

Suppose that $D$ is a bounded component of the open exterior
$M\setminus\overline\Omega$. It lies in $B$, since the complement
of the closed ball is connected and belongs to the unbounded
exterior component. As above, $D$ has finite perimeter. At each
regular interface its outward normal is opposite to that of
$\Omega$. Since the singular set contributes no perimeter, we have
$\partial^*D\subset\partial^*\Omega$ modulo $\cH_g^{n-1}$-null sets
and $\nu_D=-\nu_\Omega$ there. The corresponding terms cancel in
$D\chi_{\Omega\cup D}=D\chi_\Omega+D\chi_D$. Hence
\[
 \Per_g(\Omega\cup D)=P-\Per_g(D),\qquad
 \Vol_g(\Omega\cup D)=V+\Vol_g(D)>V.
\]
This strictly decreases the quotient, a contradiction. There are no
bounded exterior components, and all unbounded ones meet the
connected complement of a sufficiently large ball. The open exterior
is therefore connected. By \eqref{eq:representative},
$M\setminus\Omega$ is connected as well. The theorem of
Czarnecki--Kulczycki--Lubawski \cite{Connected} states that an
open connected subset of $\R^n$ has connected boundary if and
only if its complement is connected. Both hypotheses hold for
$\Omega$ under the Cartan--Hadamard diffeomorphism $M\simeq\R^n$.
Thus $\Gamma$ is connected.

To see that $\Omega\subset C$, take a complete geodesic through
any point of $\Omega$. Its first exits in the two directions lie on
$\Gamma$, and the intervening segment contains the chosen point.
Thus $C$ has nonempty interior.
Finally, substituting $P=nHV/(n-1)$ into \eqref{eq:quotient} gives
\eqref{eq:PV}.
\end{proof}

\begin{remark}
In dimension five, the confined constant-mean-curvature inequality
of Chen--Ghomi--Wang \cite[Proposition~5.1]{ChenGhomiWang}
already rules out this minimiser. There are no singular points in
this dimension, so $\Gamma$ is $C^{1,1}$, with smooth free part.
Their scalar mean curvature convention agrees with ours: the free
part has mean curvature $H>0$, and the mean curvature on obstacle
contact is at most $H$ almost everywhere. Their proposition gives
\[
 H^4P\geq4^4s_4,
\]
contradicting $H^4P=4^4s_4\theta<4^4s_4$. 
\end{remark}

We next estimate the diameter in terms of the mean curvature.
The argument uses ambient balls and therefore does not
require an intrinsic distance on a singular hypersurface.

\begin{lemma}
\label{lem:packing}
Let $W$ be a nonzero integral $k$-varifold, $k\geq1$, in a
Cartan--Hadamard manifold. Suppose that its support is compact and
connected and that
\[
 \delta W=-\Hvec\mu,\qquad |\Hvec|\leq L
 \quad\mu\text{-almost everywhere},
\]
where $L>0$. Then
\begin{equation}
 \mu(M)\geq\frac{\omega_k}{\e L^k},\qquad
 \diam(\spt\mu)\leq\frac{6\e}{\omega_k}L^{k-1}\mu(M).
 \label{eq:packing}
\end{equation}
\end{lemma}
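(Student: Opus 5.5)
The plan is to combine a monotonicity formula with a covering argument, in the spirit of Simon's diameter bound for varifolds with bounded mean curvature, adapted to the Cartan--Hadamard setting.

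\emph{Step 1: monotonicity.} Fix $x\in\spt\mu$ and put $r=d(x,\cdot)$. In a Cartan--Hadamard manifold $r$ is smooth away from $x$, and Hessian comparison with the Euclidean case gives $\Hess(r^2/2)\geq g$. For the radial field $X=\phi(r)\,r\nabla r$, the tangential divergence on any $k$-plane is therefore bounded below by its Euclidean value. Testing $\delta W(X)=-\int\langle\Hvec,X\rangle\dd\mu$ with $|\Hvec|\leq L$ and letting $\phi$ approach the indicator of $[0,\rho)$, we obtain
\[
 \frac{\dd}{\dd\rho}\left(\e^{L\rho}\frac{\mu(B_\rho(x))}{\rho^k}\right)\geq0
\]
in the distributional sense. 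The standard computation bounds $\bigl|\int\langle\Hvec,X\rangle\bigr|$ by $L\rho\,\mu(B_\rho)$, which after dividing by $\rho^{k+1}$ produces the factor $\e^{L\rho}$. Integrality gives density at least one at $\mu$-almost every point, and upper semicontinuity of the density extends this to every point of $\spt\mu$. Hence
\[
 \mu(B_\rho(x))\geq\omega_k\rho^k\e^{-L\rho}\qquad\text{for all }\rho>0 .
\]

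\emph{Step 2: the mass bound.} The support is compact, so $\mu(M)\geq\mu(B_\rho(x))$ for every $\rho$. Taking $\rho=1/L$ gives $\mu(M)\geq\omega_k/(\e L^k)$, which is the first inequality in \eqref{eq:packing}.

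\emph{Step 3: the diameter bound.} Put $D=\diam(\spt\mu)$ and $\rho=1/L$.
\begin{itemize}
\item If $D\leq 6\rho$, then $\diam\leq 6/L\leq \frac{6\e}{\omega_k}L^{k-1}\mu(M)$ by Step 2.
\item Otherwise, choose $p,q\in\spt\mu$ with $d(p,q)=D$. The function $f=d(p,\cdot)$ is continuous on the connected set $\spt\mu$, so its image contains $[0,D]$. Choose points $x_j\in\spt\mu$ with $f(x_j)=2j\rho$ for $j=0,\dots,N$, where $N=\lfloor D/(2\rho)\rfloor$.
\end{itemize}
By the triangle inequality $d(x_i,x_j)\geq 2\rho|i-j|$, so the open balls $B_\rho(x_j)$ are pairwise disjoint. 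Summing the lower bound from Step 1 gives
\[
 \mu(M)\geq (N+1)\frac{\omega_k}{\e L^k}\geq \frac{D}{2\rho}\cdot\frac{\omega_k}{\e L^k},
\]
that is, $D\leq \frac{2\e}{\omega_k}L^{k-1}\mu(M)$. This is stronger than the stated bound, and the constant $6$ leaves slack for the first case and for using closed balls.

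\emph{Main obstacle.} The delicate step is the monotonicity of Step 1: one must justify the Hessian comparison of $r^2/2$ in nonpositive curvature, handle $X$ at $r=0$, and pass from density one almost everywhere to every support point via upper semicontinuity. The rest is bookkeeping. Connectedness enters only in Step 3, to produce support points at each intermediate distance from $p$.
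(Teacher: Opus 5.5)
Your proof is correct. Steps 1 and 2 coincide with the paper's argument: Hessian comparison for $r^2/2$, the radial test field, the resulting monotonicity of $\e^{Lt}t^{-k}\mu(B_t(x))$, and extension of the density lower bound from $\mu$-almost every point to all of $\spt\mu$. The paper does the last step by approximating $x$ with density points $x_j$ and using $B_{t-\dist(x_j,x)}(x_j)\subset B_t(x)$. This is the same argument that underlies the upper semicontinuity you invoke.

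The diameter step is where you genuinely differ. The paper sets $\rho=L^{-1}$ and takes a maximal disjoint family of balls $B_\rho(x_i)$ with centres in the support, bounding their number by $N\leq\e\mu(M)/(\omega_k\rho^k)$. It then observes that the balls $B_{3\rho}(x_i)$ cover the support. It uses connectedness through the intersection graph of this cover: a chain of at most $N$ balls joins any two support points, which gives $\diam(\spt\mu)\leq6N\rho$.

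You use connectedness only through the intermediate value theorem for $\dist(p,\cdot)$ restricted to $\spt\mu$. This places support points at distances $0,2\rho,\dots,2N\rho$ from an endpoint $p$ of a diameter. The triangle inequality makes the balls $B_\rho(x_j)$ pairwise disjoint, and summing their masses gives $\diam(\spt\mu)\leq\frac{2\e}{\omega_k}L^{k-1}\mu(M)$.

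Your route is shorter and avoids the covering and chain bookkeeping. It also improves the constant by a factor of three, which would propagate to a smaller $d_n$ and hence a larger admissible $\varepsilon_n$ and smaller $c_n$. Your preliminary case $D\leq6\rho$ is unnecessary, because the argument with $N=\lfloor D/(2\rho)\rfloor$ already handles $D<2\rho$ (then $N=0$). The paper's covering argument does give extra information: the support is covered by at most $\e L^k\mu(M)/\omega_k$ balls of radius $3/L$. That information is not used anywhere else in the paper.
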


\begin{proof}
We first recall the monotonicity calculation in the form required
here. Fix $x\in M$, set $r=\dist(x,\cdot)$ and
$v(t)=\mu(B_t(x))$. Hessian comparison gives
\[
 \tr_T\Hess\left(\frac{r^2}{2}\right)\geq k
\]
on every $k$-plane $T$. Choose a smooth, compactly supported,
nonnegative and nonincreasing radial cutoff $\phi$, constant near
zero. First variation with $X=\phi(r)r\nabla r$ gives
\[
 k\int\phi(r)\dd\mu
 +\int r\phi'(r)|\nabla^W r|^2\dd\mu
 \leq L\int r\phi(r)\dd\mu.
\]
Since $\phi'\leq0$ and $|\nabla^W r|\leq1$, this implies
\[
 k\int\phi(r)\dd\mu
 \leq-\int r\phi'(r)\dd\mu+L\int r\phi(r)\dd\mu.
\]
Approximating the indicator of $[0,t)$, we obtain for almost every $t>0$
\[
 kv(t)\leq t v'_{\mathrm{ac}}(t)+Lt v(t).
\]
The nondecreasing radial mass $v$ is locally $BV$, and its singular
derivative is nonnegative. It follows, in the sense of measures on
$(0,\infty)$, that
\[
 D\bigl(\e^{Lt}t^{-k}v(t)\bigr)\geq0.
\]
At $\mu$-almost every centre, integrality gives density at least
one. Hence at each such centre
\[
 \mu(B_t(x))\geq\omega_k\e^{-Lt}t^k\quad(t>0).
\]
The same bound holds at every point of the support. Indeed, for
$x\in\spt\mu$ choose density points $x_j\to x$ and use
\[
 B_{t-\dist(x_j,x)}(x_j)\subset B_t(x)
\]
for all sufficiently large $j$, then pass to the limit.
In particular, at $\rho=L^{-1}$,
\begin{equation}
 \mu(B_\rho(x))\geq\omega_k\e^{-1}\rho^k
 \quad\text{for every }x\in\spt\mu.
 \label{eq:ballmass}
\end{equation}
This proves the first inequality in \eqref{eq:packing}.

Choose a maximal disjoint family
$\{B_\rho(x_i)\}_{i=1}^N$ with centres in $\spt\mu$.
The lower bound \eqref{eq:ballmass} ensures that the selection
terminates and that
\[
 N\omega_k\e^{-1}\rho^k\leq\mu(M).
\]
Maximality implies that the balls $B_{3\rho}(x_i)$ cover the
support. Their intersections with the support form a finite relatively
open cover. Its intersection graph is connected, since the support is
connected. A simple chain joining balls containing any two chosen
support points has at most $N$ vertices. Consecutive centres are
less than $6\rho$ apart, while the two endpoint contributions are
less than $3\rho$ each. Consequently
\[
 \diam(\spt\mu)\leq6N\rho
 \leq\frac{6\e}{\omega_k}\rho^{1-k}\mu(M),
\]
as required.
\end{proof}

\begin{corollary}
\label{cor:scale}
For the minimiser in Proposition~\ref{prop:minimiser}, set
\begin{equation}
 d_n=\frac{6\e (n-1)^{n-1}s_{n-1}}{\omega_{n-1}}.
 \label{eq:diameterconstant}
\end{equation}
Then
\begin{equation}
 \diam C=\diam\Gamma\leq\frac{d_n\theta}{H}.
 \label{eq:scales}
\end{equation}
If in addition $\sec_g\leq-\Lambda<0$ at every point of $C$, then
\begin{equation}
 H\geq\frac{(n-1)^2}{n}\sqrt\Lambda.
 \label{eq:lowerH}
\end{equation}
\end{corollary}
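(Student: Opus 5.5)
The plan is to read off \eqref{eq:scales} from Lemma~\ref{lem:packing}, and to get \eqref{eq:lowerH} from a divergence-theorem bound $P\geq(n-1)\sqrt\Lambda\,V$ on $\Omega$.

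\textbf{Step 1: $\diam C=\diam\Gamma$.} Put $D=\diam\Gamma$. Since $\Gamma\subset C$, we have $\diam C\geq D$. For the reverse inequality, fix $y\in\Gamma$. The closed ball $\overline B_D(y)$ is convex and contains $\Gamma$, so it contains $C$. Hence every $x\in C$ satisfies $\dist(x,y)\leq D$ for all $y\in\Gamma$, i.e.\ $\Gamma\subset\overline B_D(x)$. By convexity again, $C\subset\overline B_D(x)$. So any two points of $C$ are within distance $D$.

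\textbf{Step 2: the diameter bound.} Apply Lemma~\ref{lem:packing} to the perimeter varifold $W$ of Proposition~\ref{prop:minimiser}, with $k=n-1$ and $L=H$.
\begin{itemize}
\item $W$ is integral, since it has multiplicity one on $\partial^*\Omega$.
\item Its support $\Gamma$ is compact and connected, and \eqref{eq:meanbound} gives $\delta W=-\Hvec\mu$ with $|\Hvec|\leq H$.
\item Its mass is $\mu(M)=P$.
\end{itemize}
The lemma then gives $\diam\Gamma\leq 6\e\,\omega_{n-1}^{-1}H^{n-2}P$. Substituting $P=s_{n-1}\theta(n-1)^{n-1}H^{1-n}$ from \eqref{eq:PV} yields exactly $d_n\theta/H$.

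\textbf{Step 3: the lower bound on $H$.} Assume $\sec_g\leq-\Lambda$ on $C$, and fix $o\in\Omega$. Let $r=\dist(o,\cdot)$, which is smooth away from $o$ with $|\nabla r|=1$.
\begin{itemize}
\item For $x\in\Omega\subset C$, the geodesic from $o$ to $x$ lies in $C$ by convexity. Laplacian comparison along it then gives $\Delta r\geq(n-1)\sqrt\Lambda\coth(\sqrt\Lambda r)\geq(n-1)\sqrt\Lambda$ on $\Omega\setminus\{o\}$.
\item Apply Gauss--Green to the finite-perimeter set $\Omega\setminus B_\epsilon(o)$. The inner sphere contributes $O(\epsilon^{n-1})$, and the outer boundary term is at most $\int_{\partial^*\Omega}|\langle\nabla r,\nu\rangle|\leq P$.
\item Letting $\epsilon\to0$ gives $(n-1)\sqrt\Lambda\,V\leq P$.
\end{itemize}
Since $P=nHV/(n-1)$ and $V>0$, this rearranges to $H\geq(n-1)^2\sqrt\Lambda/n$.

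\textbf{Main obstacle.} The delicate point is the justification in Step 3. Comparison uses curvature only along segments inside $C$, which is exactly what convexity of $C$ provides. Gauss--Green must be applied on a finite-perimeter set with a field that is smooth only away from $o$, which is handled by excising a small ball around $o$.
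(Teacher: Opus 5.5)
Your proposal is correct and follows the paper's proof essentially step for step. It uses the same convex-ball argument for $\diam C=\diam\Gamma$, Lemma~\ref{lem:packing} with $k=n-1$ and $L=H$ combined with \eqref{eq:PV}, and Laplacian comparison along segments inside $C$ followed by Gauss--Green on $\Omega$ with a small ball about an interior point removed. (The inner-sphere term in fact enters with the favourable sign $-\cH_g^{n-1}(\partial B_\epsilon(o))$, but your $O(\epsilon^{n-1})$ bound already suffices in the limit.)
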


\begin{proof}
Apply Lemma~\ref{lem:packing} with $k=n-1$ and $L=H$. Its
diameter estimate and \eqref{eq:PV} give
\[
 \diam\Gamma\leq\frac{6\e}{\omega_{n-1}}H^{n-2}P
 =d_n\theta H^{-1}\leq d_nH^{-1}.
\]
To check the hull diameter, let $d=\diam\Gamma$. For every
$x\in\Gamma$ the closed convex ball $\overline B_d(x)$ contains
$\Gamma$ and hence $C$. It follows that, for any $y\in C$,
$\Gamma\subset\overline B_d(y)$, so again
$C\subset\overline B_d(y)$. Thus $\diam C\leq d$, and the
reverse inequality follows from $\Gamma\subset C$.

Finally choose $p\in\Omega$ and put $r=\dist(p,\cdot)$. Every
segment from $p$ to a point of $C$ lies in $C$, so Hessian comparison
gives $\Delta r\geq (n-1)\sqrt\Lambda$ on $C\setminus\{p\}$.
For all sufficiently small $t>0$ we have
$\overline B_t(p)\subset\Omega$. Gauss--Green on
$\Omega\setminus\overline B_t(p)$, using the inward radial normal
on its inner sphere, gives
\[
 (n-1)\sqrt\Lambda\bigl(V-\Vol_g(B_t(p))\bigr)
 \leq P-\cH_g^{n-1}(\partial B_t(p))\leq P=\frac{nHV}{n-1}.
\]
Letting $t\downarrow0$ proves \eqref{eq:lowerH}.
\end{proof}

\section{Hyperbolic comparison and exclusion on the convex hull}
\label{sec:comparison}

Let $h$ be the metric of constant sectional curvature $-1$ on
$\HH^n$. Its isoperimetric profile $I_h$ is realised by geodesic
balls \cite[Chapter VI]{Chavel}. We write
\[
 V_h(r)=s_{n-1}\int_0^r\sinh^{n-1} t\dd t,\qquad
 A_h(r)=s_{n-1}\sinh^{n-1} r,
\]
so that $I_h(V_h(r))=A_h(r)$. Define the Euclidean-normalised
profile quotient by
\begin{equation}
 Q_h(v)=\frac{I_h(v)^n}{n^n\omega_n v^{n-1}},\qquad v>0.
 \label{eq:Qh}
\end{equation}
The quantitative bound in the next lemma will allow us to absorb
the metric distortion at the scale of the minimiser.

\begin{lemma}
\label{lem:margin}
The function $Q_h$ is strictly increasing on $(0,\infty)$, with
$\lim_{v\downarrow0}Q_h(v)=1$. In particular $Q_h(v)>1$ for
every $v>0$. Moreover, if $v=\omega_na^n$ and $0<a\leq T$, then
\begin{equation}
 \log Q_h(v)\geq
 \frac{n(n-1)a^2}{(n+2)\cosh T(1+\cosh T)}.
 \label{eq:quantitativemargin}
\end{equation}
\end{lemma}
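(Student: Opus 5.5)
The plan is to parametrise everything by the hyperbolic radius. Writing $v=V_h(r)$, so that $I_h(v)=A_h(r)$, we get
\[
 \log Q_h(V_h(r))=n\log A_h(r)-(n-1)\log V_h(r)-\log(n^n\omega_n),
\]
and since $V_h$ is a strictly increasing bijection of $(0,\infty)$, all three claims reduce to statements in $r$.

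\textbf{Derivative.} Write $J(t)=\int_0^t\sinh^{n-1}s\dd s$. Using $V_h'=A_h$ and $A_h'/A_h=(n-1)\coth r$, we have
\[
 \frac{\dd}{\dd r}\log Q_h(V_h(r))=(n-1)\frac{F(r)}{\sinh r\,J(r)},
 \qquad F(t)=n\cosh t\,J(t)-\sinh^n t .
\]
Then $F(0)=0$ and $F'(t)=n\sinh t\,J(t)>0$, because the two terms $\pm n\cosh t\sinh^{n-1}t$ cancel. So $F>0$ on $(0,\infty)$, and strict monotonicity of $Q_h$ follows.

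\textbf{Limit at zero.} As $r\downarrow0$ we have $A_h(r)\sim s_{n-1}r^{n-1}$ and $V_h(r)\sim\omega_nr^n$. Since $s_{n-1}=n\omega_n$, this gives $Q_h\to1$, and hence $Q_h>1$ for every $v>0$.

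\textbf{Quantitative bound, step 1 (integrand).} We lower-bound the integrand $F/(\sinh J)$ using only the elementary facts already obtained.
\begin{itemize}
\item From $F\geq0$ we get $J(s)\geq\sinh^ns/(n\cosh s)$. Inserting this into $F(t)=\int_0^t n\sinh s\,J(s)\dd s$ gives $F(t)\geq\int_0^t\sinh^{n+1}s/\cosh s\dd s$.
\item Differentiating $\sinh^{n+2}t/\bigl((n+2)\cosh^2t\bigr)$ gives $\sinh^{n+1}/\cosh$ minus a nonnegative term. Hence $F(t)\geq\sinh^{n+2}t/\bigl((n+2)\cosh^2t\bigr)$.
\item Since $\cosh\geq1$, we have $J(t)\leq\int_0^t\sinh^{n-1}s\cosh s\dd s=\sinh^nt/n$.
\end{itemize}
Combining these, the integrand is at least $\tfrac{n}{n+2}\sinh t/\cosh^2t$. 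Integrating from $0$ to $r$ yields
\[
 \log Q_h(V_h(r))\geq\frac{n(n-1)}{n+2}\Bigl(1-\frac1{\cosh r}\Bigr)
 =\frac{n(n-1)}{n+2}\,\frac{\sinh^2r}{\cosh r\,(1+\cosh r)} .
\]

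\textbf{Quantitative bound, step 2 (relating $a$ and $r$).} With $\omega_na^n=V_h(r)$, the bounds $r\leq\sinh r$ and $\sinh^{n-1}\leq\sinh^{n-1}\cosh$ give
\[
 \omega_nr^n\leq V_h(r)\leq\omega_n\sinh^nr ,
\]
so $r\leq a\leq\sinh r$. Therefore $\sinh^2r\geq a^2$, and $r\leq a\leq T$ gives $\cosh r\leq\cosh T$. Substituting both into the display above yields exactly \eqref{eq:quantitativemargin}.

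The main obstacle is step 1: finding a lower bound on $F$ that is sharp enough to produce the clean factor $\tfrac{n}{n+2}$. The plan there is the bootstrap just described, feeding the qualitative inequality $F\geq0$ back into the integral formula for $F$. The only other point needing care is step 2, where the bound must go in the right direction for both $\sinh r$ and $\cosh r$.
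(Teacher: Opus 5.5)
Your proof is correct, and every computation checks out. The monotonicity step is essentially the paper's: its auxiliary function $V_h-A_h\tanh r/n$ equals $s_{n-1}F(r)/(n\cosh r)$, so both arguments establish the same positivity $n\cosh r\,J(r)>\sinh^n r$.

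The quantitative margin is reached by a different route. The paper substitutes $t=\sinh s$ in $V_h$. This gives the exact identity $Q_h(v)=(\rho/a)^{n(n-1)}$ with $\rho=\sinh r$, and an exact integral formula for $\rho^n-a^n$. The paper bounds that integrand pointwise and then applies $-\log(1-z)\geq z$. You instead stay in the radial variable and integrate a lower bound for the logarithmic derivative. You get that bound by feeding the qualitative inequality $F\geq0$ back into $F'=n\sinh t\,J$, and by using the crude estimate $J\leq\sinh^nt/n$.

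The two routes arrive at the same intermediate inequality, $\log Q_h(V_h(r))\geq\frac{n(n-1)}{n+2}\,\frac{\sinh^2r}{\cosh r(1+\cosh r)}$; the paper's chain yields exactly this if $\cosh r$ is kept in place of $\cosh T$. Both then finish with $r\leq a\leq\sinh r$.

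What each route buys:
\begin{itemize}
\item In yours, both halves of the lemma flow from the single function $F$, and the closed form $\frac{n(n-1)}{n+2}(1-1/\cosh r)$ makes the $r$-dependence transparent.
\item The paper's is shorter, needs no derivative computation for the quantitative part, and produces the explicit formula $Q_h=(\sinh r/a)^{n(n-1)}$, which is convenient in its own right.
\end{itemize}
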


\begin{proof}
The limit follows from $\sinh r/r\to1$ at zero. Put $A=A_h(r)$
and $V=V_h(r)$. Since $A'=(n-1)\coth r\,A$ and $V'=A$, we have
\[
 \frac{\dd}{\dd r}\log Q_h(V_h(r))
 =(n-1)\left(n\coth r-\frac{A}{V}\right).
\]
On the other hand,
\[
 \frac{\dd}{\dd r}\left(V-\frac{A\tanh r}{n}\right)
 =\frac{A\tanh^2r}{n}>0.
\]
The expression in parentheses vanishes at zero, and hence
$nV\coth r>A$ for $r>0$. This proves strict monotonicity.

To prove \eqref{eq:quantitativemargin}, choose $r>0$ with
$V_h(r)=v$ and put $\rho=\sinh r$. Since $V_h(r)\geq\omega_nr^n$,
we have $r\leq a\leq T$. Changing variables gives
\[
 a^n=n\int_0^\rho\frac{t^{n-1}}{\sqrt{1+t^2}}\dd t,
\]
and hence
\begin{align*}
 \rho^n-a^n
 &=n\int_0^\rho
 \frac{t^{n+1}}{\sqrt{1+t^2}(1+\sqrt{1+t^2})}\dd t\\
 &\geq\frac{n\rho^{n+2}}{(n+2)\cosh T(1+\cosh T)}.
\end{align*}
Now $Q_h(v)=(\rho/a)^{n(n-1)}$ and $\rho\geq a$.
Applying $-\log(1-z)\geq z$ to $z=1-a^n/\rho^n$ yields
\[
 \log Q_h(v)\geq(n-1)\frac{\rho^n-a^n}{\rho^n}
 \geq\frac{n(n-1)a^2}{(n+2)\cosh T(1+\cosh T)},
\]
as required.
\end{proof}

\begin{lemma}
\label{lem:chart}
Let $C$ be a convex body in a smooth Cartan--Hadamard manifold.
Suppose that $-1\leq\sec_g\leq-\lambda<0$ at every point of $C$,
where $0<\lambda\leq1$. Let $F\subset C$ be a positive-volume
set of finite perimeter. Choose $p\in C$, $R\geq\diam C$, and set
\begin{equation}
 \alpha_\lambda(R)
 =\frac{\sinh(\sqrt\lambda R)}{\sqrt\lambda\sinh R}.
 \label{eq:alpha}
\end{equation}
Then
\begin{equation}
 \cQ_g(F)\geq\alpha_\lambda(R)^{n(n-1)}Q_h(\Vol_g(F)).
 \label{eq:chart}
\end{equation}
Moreover, for $0<\lambda\leq1$,
\begin{equation}
 \log\alpha_\lambda(R)
 \geq-\frac{(1-\lambda)R^2}{6}.
 \label{eq:alphabound}
\end{equation}
\end{lemma}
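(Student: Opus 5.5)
Our plan is to build a map $\Phi:\HH^n\to M$ using exponential coordinates centred at $p$, compare the two metrics on the preimage of $C$, and move the hyperbolic isoperimetric inequality through this map.

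\textbf{The chart.} Fix a linear isometry $\iota:T_o\HH^n\to T_pM$ and set $\Phi=\exp_p\circ\iota\circ\exp_o^{-1}$. This is a diffeomorphism because $M$ is Cartan--Hadamard. Since $C$ is convex and contains $p$, every segment from $p$ to a point of $C$ stays in $C$. The curvature bounds therefore hold along every radial geodesic used, and these geodesics have length at most $\diam C\leq R$. We compare $\Phi^*g$ with $h$ on $U=\Phi^{-1}(C)$. Radial directions are isometric. For a perpendicular Jacobi field $Y$ with $Y(0)=0$, the ratio $|Y|/|Y_h|$ is nondecreasing by the Rauch comparison with the upper bound $\sec_g\geq-1$. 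Rauch comparison with $\sec_g\leq-\lambda$ gives the ratio $|Y(r)|/(\sinh(\sqrt\lambda r)/\sqrt\lambda)\geq|D_rY(0)|$.

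We want the two-sided bound $\alpha_\lambda(R)h\leq\Phi^*g\leq h'$ on tangential directions, where $h'$ is the $\sqrt\lambda$-model. The cleaner form is the following. On $U$ the map $\Phi$ is bi-Lipschitz, expanding the $h$-length of every vector by a factor between $\alpha_\lambda(r)$ and $1$ in one direction. Here $\alpha_\lambda(r)=\sinh(\sqrt\lambda r)/(\sqrt\lambda\sinh r)$ is decreasing in $r$, so it is at least $\alpha_\lambda(R)$. Concretely, Rauch gives $|d\Phi(w)|_g\leq|w|_h$, because $M$ is less negatively curved than $-1$. It also gives $|d\Phi(w)|_g\geq\alpha_\lambda(R)|w|_h$. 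For non-perpendicular $w$, split $w$ into its radial and perpendicular parts; by the Gauss lemma the two parts stay orthogonal.

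\textbf{Transfer.} Put $G=\Phi^{-1}(F)$. The first inequality gives $\Vol_g(F)\leq\Vol_h(G)$. The second gives $\Per_g(F)\geq\alpha^{n-1}\Per_h(G)$ and $\Vol_g(F)\geq\alpha^n\Vol_h(G)$. Hence
\[
 \Per_g(F)\geq\alpha^{n-1}I_h(\Vol_h(G))\geq\alpha^{n-1}I_h(\Vol_g(F)),
\]
using that $I_h$ is increasing. Dividing by $n^n\omega_n\Vol_g(F)^{n-1}$ gives $\cQ_g(F)\geq\alpha^{n(n-1)}Q_h(\Vol_g(F))$, which is \eqref{eq:chart}. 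This is why only the perimeter needs the lower Jacobian bound.

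\textbf{The bound \eqref{eq:alphabound}.} Write $f(s)=\log(\sinh s/s)$. Then
\[
 \log\alpha_\lambda(R)=f(\sqrt\lambda R)-f(R)=-\int_{\sqrt\lambda R}^{R}f'(s)\dd s .
\]
Since $f'(s)=\coth s-1/s\leq s/3$, the integral is at most $(R^2-\lambda R^2)/6$. This yields the claim.

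\textbf{Main obstacle.} The delicate step is the two-sided Rauch comparison for the full differential, not just for perpendicular Jacobi fields. In particular we must check the monotonicity of $|Y|/\sinh r$ under $\sec\geq-1$ and its analogue for $\sec\leq-\lambda$. Both are valid only because the entire segment lies in $C$, where the bounds are assumed.
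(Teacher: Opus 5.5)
Your proposal is correct and follows the paper's proof essentially step for step: exponential chart at $p$, two-sided Rauch comparison along radial segments in $C$ with the Gauss lemma and the monotonicity of $r\mapsto\alpha_\lambda(r)$, transfer of volume and perimeter, the hyperbolic isoperimetric inequality, and $s\coth s-1\leq s^2/3$ for \eqref{eq:alphabound}; the only variation is that you use monotonicity of $I_h$ rather than of $Q_h$ in the last step, which is slightly more economical. Fix the direction slips in your first paragraph: under $\sec_g\geq-1$ the ratio $|Y|/|Y_h|$ is nonincreasing, and the $\lambda$-model bounds $\Phi^*g$ from below, so the target is $\alpha_\lambda(R)^2h\leq\Phi^*g\leq h$, which is exactly what your ``cleaner form'' states.
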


\begin{proof}
Identify $M$ and $\HH^n$ by exponential coordinates at $p$ and
at a hyperbolic origin, using a linear isometry of their tangent
spaces. For $x\in C$, the radial segment $[p,x]$ stays in $C$.
The Gauss lemma and Rauch comparison along this segment give, at $x$
in polar coordinates,
\begin{equation}
 \dd r^2+\frac{\sinh^2(\sqrt\lambda r)}{\lambda}
                 g_{\SSph^{n-1}}
 \leq g\leq
 h=\dd r^2+\sinh^2r\,g_{\SSph^{n-1}}.
 \label{eq:rauch}
\end{equation}
The Jacobi-field estimate applies to every initial angular vector and
therefore gives an inequality on the full tangent space at $x$.
Only curvature along $[p,x]$ enters; nearby geodesics used to
represent the field need not remain in $C$. Here $g_{\SSph^{n-1}}$ is
the round unit-sphere metric. The function
$t\mapsto t\coth t$ is increasing, since
\[
 \frac{\dd}{\dd t}(t\coth t)
 =\frac{\sinh t\cosh t-t}{\sinh^2t}>0.
\]
It follows that
$r\mapsto\sinh(\sqrt\lambda r)/(\sqrt\lambda\sinh r)$ is
nonincreasing. Thus on $C$,
\[
 \alpha_\lambda(R)^2h\leq g\leq h.
\]
The exponential identification is a smooth diffeomorphism on a
neighbourhood of the compact set $C$, so it preserves finite
perimeter. Since $C$ is closed, $\partial^*F\subset C$ up to a
null set. Comparing volume determinants and tangential determinants
on the reduced boundary of the same coordinate set gives
\[
 \Per_g(F)\geq\alpha_\lambda(R)^{n-1}\Per_h(F),\qquad
 \Vol_g(F)\leq\Vol_h(F).
\]
The hyperbolic isoperimetric inequality and
Lemma~\ref{lem:margin} now give
\begin{align*}
 \cQ_g(F)
 &\geq\alpha_\lambda(R)^{n(n-1)}
   \frac{\Per_h(F)^n}{n^n\omega_n\Vol_h(F)^{n-1}}\\
 &\geq\alpha_\lambda(R)^{n(n-1)}Q_h(\Vol_h(F))\\
 &\geq\alpha_\lambda(R)^{n(n-1)}Q_h(\Vol_g(F)).
\end{align*}
This proves \eqref{eq:chart}. 

Finally, the elementary inequality $x\coth x-1\leq x^2/3$ holds
for $x>0$. Indeed,
\[
 \frac{\dd}{\dd x}\left(\left(1+\frac{x^2}{3}\right)\sinh x
 -x\cosh x\right)
 =\frac{x}{3}(x\cosh x-\sinh x)\geq0,
\]
and the expression being differentiated vanishes at zero.
Differentiating with respect to $t>0$ therefore gives
\[
 0\leq\frac{\partial}{\partial t}\log\alpha_t(R)
 =\frac{\sqrt t R\coth(\sqrt t R)-1}{2t}
 \leq\frac{R^2}{6}.
\]
Integrating from $\lambda$ to one and using $\alpha_1(R)=1$
proves \eqref{eq:alphabound}.
\end{proof}

We now use these comparisons to exclude a tightly pinched convex
hull. Take $d_n$ as in \eqref{eq:diameterconstant}, and set
\begin{equation}
 T_n=\frac{\sqrt2\,n}{n-1}.
 \label{eq:volumeradiusscale}
\end{equation}
The number
\begin{equation}
 \varepsilon_n=
 \min\left\{\frac12,
 \frac{3(n-1)^2}{(n+2)\cosh T_n(1+\cosh T_n)d_n^2}\right\}
 \label{eq:epsilon}
\end{equation}
belongs to $(0,1/2]$. The following proposition combines the scale
estimates of Section~\ref{sec:minimiser} with
Lemmata~\ref{lem:margin} and~\ref{lem:chart}. It is the key local
statement used in our proof of Theorem~\ref{thm:main}.

\begin{proposition}
\label{prop:localexclusion}
Let $\Omega$, $\theta<1$ and $C$ be as in
Proposition~\ref{prop:minimiser}. For $\varepsilon_n$ chosen in
\eqref{eq:epsilon}, there is no $b>0$ such that
\begin{equation}
 -b^2\leq\sec_g\leq-(1-\varepsilon_n)b^2
 \quad\text{at every point of }C.
 \label{eq:hullpinching}
\end{equation}
\end{proposition}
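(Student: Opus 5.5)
Suppose, for contradiction, that some $b>0$ satisfies \eqref{eq:hullpinching}. First I will rescale: replace $g$ by $b^2g$. The quotient $\cQ_g$ is scale invariant, and the conclusions of Proposition~\ref{prop:minimiser} are preserved: $\Omega$ is still a quotient minimiser in the rescaled ball with the same $\theta<1$, and $H$, $P$, $V$ and $\diam C$ scale accordingly. So I may assume $-1\leq\sec_g\leq-\lambda$ on $C$ with $\lambda=1-\varepsilon_n\geq1/2$. The plan is to apply Lemma~\ref{lem:chart} to $F=\Omega\subset C$ with $R=\diam C$, and then play the distortion factor against the margin of Lemma~\ref{lem:margin}. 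Both quantities will be expressed at the mean-curvature scale $H^{-1}$.

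\textbf{Step 1: the scale bounds.} Corollary~\ref{cor:scale} with $\Lambda=\lambda$ gives
\[
 R=\diam C\leq\frac{d_n\theta}{H},\qquad H\geq\frac{(n-1)^2\sqrt\lambda}{n}.
\]
By \eqref{eq:PV}, $V=\omega_na^n$ with $a=\theta^{1/n}(n-1)/H$. Since $\theta<1$ and $\lambda\geq1/2$, this yields
\[
 a\leq\frac{n-1}{H}\leq\frac{n}{(n-1)\sqrt\lambda}\leq\frac{\sqrt2\,n}{n-1}=T_n,
\]
so Lemma~\ref{lem:margin} applies with $T=T_n$. Write $K=(n+2)\cosh T_n(1+\cosh T_n)$.

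\textbf{Step 2: the comparison inequality.} Taking logarithms in \eqref{eq:chart} and using \eqref{eq:alphabound} and \eqref{eq:quantitativemargin}, I expect
\[
 \log\theta\geq-\frac{n(n-1)\varepsilon_nR^2}{6}+\frac{n(n-1)a^2}{K}
 \geq\frac{n(n-1)}{H^2}\left(\frac{(n-1)^2\theta^{2/n}}{K}-\frac{\varepsilon_nd_n^2\theta^2}{6}\right).
\]
The choice \eqref{eq:epsilon} gives $\varepsilon_nd_n^2\leq3(n-1)^2/K$, so the subtracted term is at most $(n-1)^2\theta^2/(2K)$. Since $0<\theta<1$ and $n\geq2$, we have $\theta^2\leq\theta^{2/n}$. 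Hence the bracket is at least $(n-1)^2\theta^{2/n}/(2K)>0$. This forces $\log\theta>0$, contradicting $\theta<1$.

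\textbf{Main obstacle.} The delicate point is that the two competing terms must carry the same power of $H^{-1}$ and compatible powers of $\theta$. That is exactly what the pair \eqref{eq:scales} and \eqref{eq:PV} provides: both $R^2$ and $a^2$ are of order $H^{-2}$. The $\theta$-exponents then compare favourably only because $\theta<1$. I will also confirm the hypotheses of Lemma~\ref{lem:chart}: $C$ is a convex body containing $\Omega$, by Proposition~\ref{prop:minimiser}, and the pinching holds on all of $C$. No smallness of $V$ is needed, so the argument is uniform in scale.
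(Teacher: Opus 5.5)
Your proposal is correct and follows the paper's proof essentially step for step. You rescale to $-1\leq\sec_g\leq-\lambda$, bound $a\leq T_n$ via Corollary~\ref{cor:scale}, and play \eqref{eq:alphabound} against \eqref{eq:quantitativemargin} using $\varepsilon_nd_n^2\leq3(n-1)^2/K$ together with $\theta^2\leq\theta^{2/n}$, which is the paper's $R^2/a^2\leq d_n^2/(n-1)^2$. The only cosmetic difference is that you take $R=\diam C$ in Lemma~\ref{lem:chart} and bound it by $d_n\theta/H$ afterwards, whereas the paper takes $R=d_n\theta/H$ directly; since the bound in \eqref{eq:alphabound} is monotone in $R$, this changes nothing.
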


\begin{proof}
Suppose that \eqref{eq:hullpinching} holds.
Replace $g$ by $b^2g$. Sectional curvatures are divided by $b^2$,
and the quotient \eqref{eq:quotient} is unchanged.
Writing $H$, $V$ and $P$ for the rescaled quantities,
all the minimiser identities and scale estimates retain their form.
We may therefore assume
\[
 -1\leq\sec_g\leq-\lambda\quad\text{on }C,
 \qquad \lambda=1-\varepsilon_n\geq\frac12.
\]
Set
\[
 a=\left(\frac{V}{\omega_n}\right)^{1/n}
   =\frac{(n-1)\theta^{1/n}}{H},\qquad
 R=\frac{d_n\theta}{H}.
\]
By Corollary~\ref{cor:scale}, $\diam C\leq R$ and
\[
 a\leq\frac{n}{(n-1)\sqrt\lambda}\leq T_n.
\]
Also, since $0<\theta<1$,
\[
 \frac{R^2}{a^2}
 =\frac{d_n^2}{(n-1)^2}\theta^{2-2/n}
 \leq\frac{d_n^2}{(n-1)^2}.
\]
Apply Lemma~\ref{lem:chart} to $\Omega\subset C$, and then use
\eqref{eq:alphabound} and \eqref{eq:quantitativemargin}. We obtain
\begin{align*}
 \log\theta
 &\geq-\frac{n(n-1)\varepsilon_nR^2}{6}
      +\frac{n(n-1)a^2}{(n+2)\cosh T_n(1+\cosh T_n)}\\
 &\geq n(n-1)a^2\left(
 \frac{1}{(n+2)\cosh T_n(1+\cosh T_n)}
 -\frac{\varepsilon_nd_n^2}{6(n-1)^2}\right)\\
 &\geq\frac{n(n-1)a^2}{2(n+2)\cosh T_n(1+\cosh T_n)}>0,
\end{align*}
where the last inequality follows from \eqref{eq:epsilon}.
This contradicts $\theta<1$.
\end{proof}

\begin{corollary}
\label{cor:localball}
Let $B$ be an open geodesic ball in a smooth Cartan--Hadamard
manifold. If \eqref{eq:pinching} holds on $\overline B$, with
$\varepsilon_n$ as in \eqref{eq:epsilon}, then \eqref{eq:CH}
holds for every finite-perimeter set $E\subset B$.
\end{corollary}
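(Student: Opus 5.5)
We argue by contradiction, running the construction of Proposition~\ref{prop:minimiser} inside $B$ itself rather than in a large ball. Suppose some finite-perimeter $E\subset B$ has $\cQ_g(E)<1$; since $\cQ_g(E)<1$ forces positive volume, we may assume $\Vol_g(E)>0$. We want a quotient minimiser $\Omega$ whose hull $C=\conv(\partial\Omega)$ lies in $\overline B$. Then \eqref{eq:pinching} holds on $C$ with $b$ the constant in \eqref{eq:pinching}, and Proposition~\ref{prop:localexclusion} gives the contradiction.

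\textbf{Step 1: minimise in $B$.} We minimise $\cQ_g$ over positive-volume finite-perimeter subsets of $B$, with perimeter taken in $M$ so that boundary portions on $\partial B$ count. The endpoint arguments in the proof of Proposition~\ref{prop:minimiser} do not use that $B$ is large. The small-volume bound \eqref{eq:localBV} is local, and $\cQ_g(B)\ge1$ holds for every geodesic ball by the Jacobi-field computation. Hence, since the infimum is below one, a minimiser $\Omega\subset B$ exists with $0<\Vol_g(\Omega)<\Vol_g(B)$ and $\theta<1$.

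\textbf{Step 2: transfer the structure of Proposition~\ref{prop:minimiser}.} The remaining conclusions of Proposition~\ref{prop:minimiser} carry over verbatim, because they concern a quotient minimiser in an arbitrary open geodesic ball: regularity via \cite[Lemma 7.2]{GS}, the bound \eqref{eq:meanbound}, connectedness of $\Gamma$, $\Omega\subset C$, and \eqref{eq:PV}. In that proof, the only role of the ball's size was to contain the initial set $E$, and here $E\subset B$ already.

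\textbf{Step 3: locate the hull.} Since $\Gamma=\partial\Omega\subset\overline B$ and closed geodesic balls in a Cartan--Hadamard manifold are convex, we get $C=\conv(\Gamma)\subset\overline B$. Thus \eqref{eq:pinching} holds at every point of $C$, which is exactly \eqref{eq:hullpinching}. Proposition~\ref{prop:localexclusion} then rules out the existence of $\Omega$, so $\cQ_g(E)\ge1$ for every such $E$, which is \eqref{eq:CH}.

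\textbf{Main obstacle.} The delicate point is Step 2: checking that every ingredient of Proposition~\ref{prop:minimiser} is genuinely independent of how the ball was chosen. Two places need care. First, the obstacle-contact argument uses $H_{\partial B}\ge0$, which holds for every geodesic ball by convexity. Second, the connectedness argument uses that the complement of $\overline B$ is connected; in $M\simeq\R^n$ this holds for any closed geodesic ball, since $\overline B$ is a compact convex set. Neither depends on the radius, so I expect the transfer to go through.
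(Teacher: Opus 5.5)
Your proposal is correct and follows the paper's proof: minimise the quotient in $B$ itself, note that the proof of Proposition~\ref{prop:minimiser} goes through unchanged because it only needs the ball to contain $E$, observe that $C=\conv(\partial\Omega)\subset\overline B$ by convexity of closed geodesic balls, and invoke Proposition~\ref{prop:localexclusion}. Your extra checks are exactly the radius-independent facts behind the paper's one-line appeal to Proposition~\ref{prop:minimiser}: positivity of $H_{\partial B}$ in the obstacle-contact step, and connectedness of $M\setminus\overline B$ in the connectedness argument.
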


\begin{proof}
If a set violates the inequality, minimise the quotient in the same
ball $B$. The proof of Proposition~\ref{prop:minimiser} applies.
The resulting convex hull lies in $\overline B$, contradicting
Proposition~\ref{prop:localexclusion}.
\end{proof}

\section{Proof of Theorem~\ref{thm:main}}
\label{sec:proof}

We now convert slow variation of the curvature radius into constant
pinching on the hull of a possible minimiser. The diameter estimate
turns the scalar variation bound into the pinching required by
Proposition~\ref{prop:localexclusion}.

\begin{proof}[Proof of Theorem~\ref{thm:main}]
Choose $\varepsilon_n$ as in \eqref{eq:epsilon} and set
\begin{equation}
 c_n=\frac{n d_n}{(n-1)^2}.
 \label{eq:variationcoefficient}
\end{equation}
Suppose the hypotheses hold but some bounded finite-perimeter set
violates \eqref{eq:CH}. Put
$\tau=\Lip_g(\kappa^{-1/2})$ and choose the minimiser and its hull
$C$ as in Proposition~\ref{prop:minimiser}. Set
\[
 k_-=\min_C\kappa>0,\qquad k_+=\max_C\kappa,\qquad
 D=\diam C.
\]
The upper curvature bound on $C$ is
$\sec_g\leq-(1-\eta)k_-$. Corollary~\ref{cor:scale} gives
\begin{equation}
 H\geq\frac{(n-1)^2}{n}\sqrt{(1-\eta)k_-},\qquad
 D\leq\frac{d_n}{H}.
 \label{eq:variableH}
\end{equation}
The oscillation of $\ell=\kappa^{-1/2}$ on $C$ satisfies
\[
 \frac1{\sqrt{k_-}}-\frac1{\sqrt{k_+}}
 =\max_C\ell-\min_C\ell\leq\tau D.
\]
Multiplying by $\sqrt{k_-}$ and using \eqref{eq:variableH}, we
obtain
\[
 \sqrt{\frac{k_-}{k_+}}
 \geq1-\tau D\sqrt{k_-}
 \geq1-\frac{n d_n\tau}{(n-1)^2\sqrt{1-\eta}}.
\]
By \eqref{eq:jointcriterion} the right-hand side is positive.
Squaring and multiplying by $1-\eta$ therefore yields
\begin{equation}
 (1-\eta)\frac{k_-}{k_+}
 \geq\left(\sqrt{1-\eta}-\frac{n d_n}{(n-1)^2}\tau\right)^2
 \geq1-\varepsilon_n.
 \label{eq:oscillationpinching}
\end{equation}
Consequently
\[
 -k_+\leq\sec_g\leq-(1-\varepsilon_n)k_+
 \quad\text{on }C.
\]
This contradicts Proposition~\ref{prop:localexclusion}, with
$b=\sqrt{k_+}$.
\end{proof}

\end{document}